\def\frk{\frak}               
\def\mm{{\frk m}}
\def\Phi{{\frk n}}
\def\Phi{{\frk N}}
\def\opn#1#2{\def#1{\operatorname{#2}}} 
\opn\chara{char} \opn\length{\ell} \opn\pd{pd} \opn\rk{rk}
\opn\projdim{proj\,dim} \opn\injdim{inj\,dim} \opn\rank{rank}
\opn\depth{depth} \opn\sdepth{sdepth} \opn\fdepth{fdepth}
\opn\grade{grade} \opn\height{height} \opn\embdim{emb\,dim}
\opn\codim{codim}  \opn\min{min} \opn\max{max}
\opn\Tr{Tr} \opn\bigrank{big\,rank}
\opn\superheight{superheight}\opn\lcm{lcm}
\opn\trdeg{tr\,deg}
\opn\reg{reg} \opn\lreg{lreg} \opn\ini{in} \opn\lpd{lpd}
\opn\size{size}
\opn\div{div} \opn\Div{Div} \opn\cl{cl} \opn\Cl{Cl}
\opn\Spec{Spec} \opn\Supp{Supp} \opn\supp{supp} \opn\Sing{Sing}
\opn\Ass{Ass} \opn\Min{Min}
\opn\Ann{Ann} \opn\Rad{Rad} \opn\Soc{Soc}
\opn\Im{Im} \opn\Ker{Ker} \opn\Coker{Coker} \opn\Am{Am}
\opn\Hom{Hom} \opn\Tor{Tor} \opn\Ext{Ext} \opn\End{End}
\opn\Aut{Aut} \opn\id{id}  \opn\deg{deg}
\opn\nat{nat}
\opn\pff{pf}
\opn\Pf{Pf} \opn\GL{GL} \opn\SL{SL} \opn\mod{mod} \opn\ord{ord}
\opn\Gin{Gin} \opn\Hilb{Hilb}
\opn\aff{aff} \opn\con{conv} \opn\relint{relint} \opn\st{st}
\opn\lk{lk} \opn\cn{cn} \opn\core{core} \opn\vol{vol}
\opn\link{link} \opn\star{star}
\opn\gr{gr}
\def\pot#1#2{#1[\kern-0.28ex[#2]\kern-0.28ex]}
\opn\dirlim{\underrightarrow{\lim}}
\opn\inivlim{\underleftarrow{\lim}}
\let\to=\rightarrow
\def\Implies{\ifmmode\Longrightarrow \else
        \unskip${}\Longrightarrow{}$\ignorespaces\fi}
\def\implies{\ifmmode\Rightarrow \else
        \unskip${}\Rightarrow{}$\ignorespaces\fi}
\def\iff{\ifmmode\Longleftrightarrow \else
        \unskip${}\Longleftrightarrow{}$\ignorespaces\fi}
\newtheorem{Theorem}{Theorem}[]
\newtheorem{Corollary}[Theorem]{Corollary}
\newtheorem{Proposition}[Theorem]{Proposition}
\newtheorem{Remark}[Theorem]{Remark}
\newtheorem{Question}[Theorem]{Question}
\let\epsilon\varepsilon
\let\phi=\varphi
\let\kappa=\varkappa
\def\qed{\ifhmode\textqed\fi
      \ifmmode\ifinner\quad\qedsymbol\else\dispqed\fi\fi}
\def\textqed{\unskip\nobreak\penalty50
       \hskip2em\hbox{}\nobreak\hfil\qedsymbol
       \parfillskip=0pt \finalhyphendemerits=0}
\def\dispqed{\rlap{\qquad\qedsymbol}}
\opn\dis{dis}
\def\pnt{{\raise0.5mm\hbox{\large\bf.}}}
\opn\Lex{Lex}
\begin{document}
\title{  Nested  Artin Strong Approximation Property.}

\author{Zunaira Kosar and Dorin Popescu }
\thanks{}

\address{Zunaira Kosar, Abdus Salam School of Mathematical Sciences,GC University, Lahore, Pakistan}
\email{ zunairakosar@gmail.com}

\address{Dorin Popescu, Simion Stoilow Institute of Mathematics of the Romanian Academy, Research unit 5,
University of Bucharest, P.O.Box 1-764, Bucharest 014700, Romania}
\email{dorin.popescu@imar.ro}

\begin{abstract} We study the Artin Approximation property with constraints in a different frame. As  a consequence we  give a nested Artin Strong Approximation property for algebraic power series rings over a field.

  {\it Key words:} Henselian rings, Etale neighborhood,  Algebraic power series rings, Nested Artin approximation property, Artin approximation with constraints.

{\it 2010 Mathematics Subject Classification:} Primary 13B40, Secondary :   13J05, 14B12.
\end{abstract}

\maketitle

\vskip 0.5 cm

\section*{Introduction}

Let $K$ be a field and $R=K\langle x\rangle$, $x=(x_1,\ldots, x_n)$ be the ring of algebraic power series in $x$ over $K$, that is the algebraic closure  of the polynomial ring $K[x]$ in the formal power series ring ${\hat R}=K[[x]]$.
 Let $f=(f_1,\ldots,f_q) $ be a system of polynomials in $Y=(Y_1,\ldots,Y_p)$ over $R$ and $\hat y$ be a solution of $f$ in the completion ${\hat R}$ of $R$.
\begin{Theorem}[M. Artin \cite{A}]\label{ar}  For any $c\in {\bf N}$ there exists a solution $y^{(c)}$ in $R$ such that $y^{(c)}\equiv {\hat y}$  mod $(x)^c$.
\end{Theorem}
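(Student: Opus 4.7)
The plan is to absorb the approximation content into the equations and reduce to a bare existence statement, then settle the existence via the Henselian structure of $R = K\langle x\rangle$ together with a smoothing of the completion map $R \to \hat R$.

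For the reduction, let $\tilde y \in R^p$ be the polynomial truncation of $\hat y$ to order $<c$, so $\hat y - \tilde y \in (x)^c \hat R^p$. Introducing fresh indeterminates $Z_{j,\alpha}$ (for $1 \le j \le p$ and multi-indices $\alpha$ with $|\alpha| = c$) and substituting $Y_j := \tilde y_j + \sum_{|\alpha|=c} x^\alpha Z_{j,\alpha}$ into $f$, we obtain a new polynomial system $g(Z)=0$ over $R$ which still admits a solution in $\hat R$ (namely the corresponding coefficients of $\hat y - \tilde y$). Any solution $z \in R^N$ of $g$ then yields $y^{(c)} \in R^p$ with $f(y^{(c)}) = 0$ and $y^{(c)} \equiv \hat y \pmod{(x)^c}$ automatically. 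Hence it suffices to prove that every polynomial system over $R$ which has a solution in $\hat R$ also has a solution in $R$.

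For this existence statement, the completion map $R \to \hat R$ is a regular homomorphism of Noetherian local rings, so by N\'eron--Popescu desingularization $\hat R$ is a filtered colimit of smooth finite-type $R$-algebras $B_\lambda$. The solution $\hat y$ factors through some $B_\lambda$, providing an $R$-algebra map $\phi\colon R[Z]/(g) \to B_\lambda$. Since $B_\lambda/R$ is smooth and $R$ is Henselian with residue field $K$, the reduction of $\hat y$ gives a $K$-rational point of the smooth $K$-scheme $B_\lambda \otimes_R K$; Hensel's lemma lifts this point to an $R$-algebra retraction $\psi\colon B_\lambda \to R$, and $\psi \circ \phi$ produces the desired solution in $R^p$. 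The main obstacle is N\'eron--Popescu desingularization itself, which is deep; an alternative route closer to Artin's original argument is a double induction on $n = |x|$ (via Weierstrass preparation) and on the height of the Jacobian ideal of $f$ at $\hat y$, using Hensel's lemma for the smooth base case, with the non-smooth inductive step requiring a careful blow-up of Jacobian minors.
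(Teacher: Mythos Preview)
Your argument is correct. The first reduction (absorbing the congruence condition into new variables $Z_{j,\alpha}$) is standard and valid: any element of $(x)^c\hat R$ can be written as $\sum_{|\alpha|=c} x^\alpha z_\alpha$ with $z_\alpha\in\hat R$, so the formal solution $\hat y$ does produce a formal solution of the enlarged system $g$. For the existence step, $R=K\langle x\rangle$ is excellent and Henselian, so $R\to\hat R$ is regular, N\'eron--Popescu applies, and the section argument over a Henselian base goes through exactly as you describe.

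Note, however, that the paper does \emph{not} give its own proof of this statement: Theorem~\ref{ar} is quoted as Artin's result with a citation to \cite{A}, and is used only as background. So there is no ``paper's proof'' to compare against. That said, two remarks are worth making. First, your main route relies on general N\'eron--Popescu desingularization, which is due to the second author of the present paper and was proved well after Artin's 1969 theorem; it is logically sound but historically inverted, and one should be aware that in many expositions Artin approximation is an \emph{input} to (or at least independent of) the desingularization theorem, so one must be careful not to create a circularity depending on which proof of desingularization one has in mind. Second, the alternative you sketch at the end (induction on $n$ via Weierstrass preparation, plus induction on the height of the singular locus of $f$, with Tougeron/Hensel for the smooth case) is essentially Artin's original argument in \cite{A}; if the goal were a self-contained proof not leaning on desingularization, that would be the route to flesh out.
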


Also  M. Artin proved before (see \cite{A0}) a similar statement for the ring $R$ of complex convergent power series and later  (see \cite[p.7]{A1}) asked, whether, given $c\in \mathbb{N}$ and a formal solution $ y(x)=( y_1(x), \ldots, y_p(x)) \in \mathbb{C}[[x]]^p$ satisfying
$$ y_j(x)\in K [[x_1, \ldots, x_{s_j}]] \ \ \forall j\in [p]$$
for some integers $s_j\in [n]$, there exists a convergent solution $ y'(x)$ of $f$
such that $y'(x)\equiv y(x)$ mod $(x)^c$ and
 $$y'_j(x)\in k\{x_1, \ldots,x_{s_j}\} \ \ \forall j\in [p].$$

Shortly after,  A. Gabrielov \cite{Ga} (see also \cite[Example 5.3.1]{R}) gave an example showing that the answer to the previous question is negative in general.
On the other hand, since Theorem \ref{ar} remains valid if we replace convergent power series by algebraic power series,  the question of M. Artin is also relevant in this context and it appears that in this case the question has a positive answer as it is shown in  \cite[Theorem 3.7]{P} (see also \cite[Corollary 3.7]{P1}, \cite[Theorem 5.2.1]{R}).

\begin{Question}\label{q1}
 (Artin Approximation with constraints \cite[Problem 1, page 65]{R}) Let $R$ be an excellent local subring of $K[[x]]$, $x=(x_1,\ldots,x_n)$ such that the completion of $R$ is $K[[x]]$ and $f\in R[Y]^q$, $Y=(Y_1,\ldots,Y_p)$. Assume that there exists a formal solution $\hat{y}\in K[[x]]^p$ of $f=0$ such that $\hat{y}_i\in  K[[\{x_j: j\in J_i\}]]$ for some subset $J_i\subset [n]$, $i\in [p]$. Is it possible to approximate $\hat{y}$  by a solution $y\in R^p$ of $f=0$ such that
$y_i\in R\cap K[[\{x_j: j\in J_i\}]]$, $i\in [p]$?
\end{Question}
Similarly,  we considered below the following question.

\begin{Question}\label{q2} Let $K\subset K'$ be a field extension, $R=K[x]_{(x)}$, $R'=K'[x]_{(x)}$ (resp.  $R=K\langle x\rangle$, $R'=K'\langle x\rangle$) and $f\in R[Y]^p$, $Y=(Y_1,\ldots,Y_m)$.
Assume that there exists a  solution $\hat{y}\in R'^p$ of $f=0$ such that $\hat{y}_i\in  K'[\{x_j: j\in J_i\}]_{(x_{J_i})}$ (resp. $ K'\langle\{x_j: j\in J_i\}\rangle$) for some subset $J_i\subset [n]$, $i\in [m]$. Is it possible to find  a solution $y\in R^p$ of $f=0$ such that
$y_i\in  K[\{x_j: j\in J_i\}]_{(x_{J_i})}$ (resp,  $ K\langle\{x_j: j\in J_i\}\rangle$), and  $\ord y_i=\ord \hat{y}_i$, $i\in [p]$?
\end{Question}

We show (see Proposition \ref{proploc.} and Theorem \ref{t}) that Question \ref{q2} has a positive answer when the field extension $K\subset K'$ is algebraically pure.
A ring morphism $u:A\to B$ is called {\em algebraically pure} (see \cite{P0}), if every finite system of polynomial equations over $A$ has a solution in $A$ if it has a solution in $B$. A finite type ring morphism is algebraically pure if and only if it has a retraction and a filtered inductive limit of algebraically pure morphisms is an algebraically pure morphism by \cite{P0} (see also \cite[Theorem 1.10]{Pi}). It is easy to see that a field extension of an algebraically closed field is algebraically pure and the ultrapower of fields define algebraically pure field extension (see the proof of Theorem   
 \ref{t2}).

The above questions are related with the so called  Artin approximation property. There exists also a strong approximation property (see \cite{Gr}, \cite{A}, \cite{PP}, \cite{K},
 \cite{P0}, \cite{P}, \cite{P1}, \cite{R}, \cite{P2}).

\begin{Question} (Strong Artin Approximation with constraints \cite[Problem 2, page 65]{R}) \label{q3}
Let us consider $f(y)\in K[[x]][y]^q$ and $J_i \subset [n]$, $i \in [p]$. Does there
exist a function $\nu : \mathbb{N} \to \mathbb{N}$ such that
for all $c \in \mathbb{ N}$ and all $\hat{y}_i(x) \in  K[[x_{J_i}]]$, $j \in J_i$, $i \in [p]$, such that
$f(\hat{y}(x))\in (x)^{\nu(c)}$, $\hat{y}(x)=(\hat{y}_1(x),\ldots,\hat{y}_p(x))$,
there exist $y_i(x) \in K[[x_{J_i}]]$, $i \in [m]$ such that $f(y(x)) = 0$, $y(x)=(y_1(x),\ldots,y_p(x))$ and $\hat{y}_i(x) \equiv y_i(x)$ mod $ (x)^c$,
$i \in [ p]$?
\end{Question}

This question has a positive answer when $K=\mathbb{C}$ but a negative one when $K=\mathbb{Q}$ (see \cite{BDLV}, \cite[Proposition 3.3.4]{R} and \cite[Example 5.4.5]{R})). 
Similarly,  we considered below the following nested question.

\begin{Question}\label{q4}

Let us consider a field $K$,   $f=(f_1,\ldots,f_q)\in K\langle x\rangle[Y]^q$, $Y=(Y_1,\ldots,Y_p)$  and  $0\leq s_1\leq \ldots \leq s_p\leq n$ be some non-negative integers.
 Does there
exist a function $\nu : \mathbb{N}^p \to \mathbb{N}$ such that
for all $c \in \mathbb{ N}^p$ and all $\hat{y}_i(x) \in  K[x_1,\ldots,x_{s_i}]$,  such that $\ord \hat{y}_i(x)=c_i$,  $i \in [p]$ and
$f(\hat{y}(x))\in (x)^{\nu(c)}$,  $\hat{y}(x)=(\hat{y}_1(x)\ldots,\hat{y}_p(x))$,
there exist $y_i(x) \in K\langle x_1,\ldots,x_{s_i}\rangle$, $i \in [p]$ such that $f(y(x)) = 0$, $y(x)=(y_1(x),\ldots,y_p(x))$ and $\ord y_i(x)=c_i$,
$i \in [ p]$?
\end{Question}

Theorem \ref{t2} shows a positive answer to this question.
The proof uses the ultrapower methods (see \cite{BDLV}, \cite{P0}, \cite{P}, \cite{P1}) and so it is not constructive. We should mention that there exists  a stronger result (see Remark \ref{r1}).  

Artin approximation property with constraints is necessary in CR Geometry (see \cite{BM} and \cite{Mir}), the nested case appears in the construction of the analytic deformations of a complex analytic germ when it has an isolated singularity (see \cite{Gra}, 
\cite{K}, \cite{P2}). It is also used to prove that analytic set germs are homeomorphic to set germs (see \cite{Mo}), or to polynomial germs (see \cite{BPR}) having no assumption on the singular locus. 

We owe thanks to G.\ Rond who hinted us the Remark \ref{r1} and to a referee who showed us several misprints and had useful comments.

\section{Properties on polynomial rings similar to the Artin approximation with constraints.}

Question \ref{q1} together with \cite[Theorem 3.4]{CPR} give the idea of the following proposition.

\begin{Proposition}
Let $M \subset K[x]^p$ be a finitely generated $K[x]$-submodule and $K \to K'$  a field extension. Let $J_i$, $i\in [p]$ be  subsets of $[n]$, $x_{J_i}=(x_k)_{k\in J_i}$ and $A_i=K[x_{J_i}]$,  $A'_i=K'[x_{J_i}]$, $i\in [p]$. Set
 $\mathcal{N}=M \cap (A_1 \times \cdots \times A_p)$ and
  $\mathcal{N'}=(K'[x]M)\cap(A'_1 \times \cdots \times A'_p).$
If $\hat{u}=\sum_{j=1}^{r}w_j\hat{v}_j \in \mathcal{N'}$ for some $w_j \in M$ and $\hat{v}_j \in K'[x]$ then there exist $v_{j}\in K[x]$ such that
$u=\sum_{j=1}^{r}w_jv_{j} \in \mathcal{N}$ and $\min_{i\in [p]}\ord u_i=\min_{i\in [p]}\ord \hat{ u}_i$.
\end{Proposition}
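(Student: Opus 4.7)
The plan is to translate the problem into a $K$-linear algebra question and then exploit flatness of the field extension $K\subset K'$ to transfer a $K'$-solution to a $K$-solution.

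Writing each $w_j=(w_{j1},\dots,w_{jp})$ with $w_{ji}\in K[x]$, the $i$-th coordinate of $u=\sum_{j=1}^{r}w_jv_j$ is $u_i=\sum_{j=1}^{r}w_{ji}v_j$. Set $c:=\min_{i\in[p]}\ord\hat u_i$. The requirement $u_i\in A_i=K[x_{J_i}]$ for every $i\in[p]$ is equivalent to demanding that the coefficient of every monomial $x^\alpha$ involving some $x_k$ with $k\notin J_i$ in $\sum_j w_{ji}v_j$ vanishes. The requirement $\ord u_i\ge c$ for every $i$ is equivalent to demanding that the coefficient of every monomial of total degree $<c$ in each $u_i$ vanishes. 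All these conditions are homogeneous $K$-linear equations in the coefficients of the unknowns $v_j\in K[x]$, because the $w_{ji}$ have their coefficients in $K$.

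Let $V_c\subset K[x]^r$ be the $K$-subspace cut out by these two families of conditions, and let $V_{c+1}$ be defined analogously with $c+1$ in place of $c$; let $V'_c,V'_{c+1}\subset K'[x]^r$ be the subspaces defined by the same equations over $K'$. Realising $V_c$ as the kernel of a $K$-linear map $L:K[x]^r\to\bigoplus_{(i,\alpha)}K$ and using exactness of $-\otimes_K K'$, base change preserves these kernels, so $V_c\otimes_K K'=V'_c$ and $V_{c+1}\otimes_K K'=V'_{c+1}$. Taking quotients gives $(V_c/V_{c+1})\otimes_K K'\cong V'_c/V'_{c+1}$.

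By hypothesis $\hat v:=(\hat v_1,\dots,\hat v_r)\in V'_c$, and since $\min_{i\in[p]}\ord\hat u_i=c$ (not $\ge c+1$), we also have $\hat v\notin V'_{c+1}$. Hence $V'_c/V'_{c+1}\ne 0$, which by the isomorphism above forces $V_c/V_{c+1}\ne 0$, and any $v\in V_c\setminus V_{c+1}$ works: $v\in V_c$ gives $u=\sum_j w_jv_j\in M\cap(A_1\times\cdots\times A_p)=\mathcal N$ with $\min_i\ord u_i\ge c$, while $v\notin V_{c+1}$ forces $\ord u_i\le c$ for at least one $i$, so $\min_i\ord u_i=c$. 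I do not expect a serious obstacle here; the only point requiring a little care is that the relevant subspaces of the infinite-dimensional space $K[x]^r$ are cut out by linear equations with coefficients in $K$, so the base change is well-behaved purely because $K\to K'$ is flat.
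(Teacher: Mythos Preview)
Your proof is correct and rests on the same idea as the paper's: reduce the statement to a family of $K$-linear equations in the coefficients of the $v_j$, and use flatness of $K\to K'$ to descend a $K'$-solution to a $K$-solution. The only difference is packaging. The paper treats both the coefficients of the $v_j$ and of the $u_i$ as unknowns in a single homogeneous linear system, then drops all variables whose value in the given $K'$-solution is zero; this forces any $K$-solution to have $u\in\mathcal N$ and $\min_i\ord u_i\ge c$ automatically, and the strict equality $\min_i\ord u_i=c$ is obtained by noting that the $K$-solutions span the $K'$-solutions, so one of them must have the relevant degree-$c$ coefficient nonzero. You instead keep only the $v_j$ as unknowns, encode the constraints ``$u_i\in A_i$'' and ``$\ord u_i\ge c$'' directly as $K$-linear conditions on $v$, and handle the order via the base-change isomorphism $(V_c/V_{c+1})\otimes_K K'\cong V'_c/V'_{c+1}$. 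Your formulation is slightly cleaner and avoids singling out a particular coefficient, at the cost of invoking exactness of $-\otimes_K K'$ on infinite-dimensional spaces (which, as you note, is unproblematic).
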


\begin{proof}
Let $w_1, \ldots , w_r$ be the generators of $M$ and ${\hat u}\in \mathcal{N'}$. We have \begin{equation}\hat{u}=\sum_{j=1}^{r}w_j\hat{v}_j\end{equation}\label{eq1} has this form for some polynomials $\hat{v}_j\in K'[x]$.
Let $\hat{u}_i=\sum_{l\in \mathbb{N}^n, |l|\leq \alpha}\hat{u}_{il}x^l$,  $w_{ij}=\sum_{t\in \mathbb{N}^n, |t|\leq \beta}w_{i,j,t}x^t$, $i\in [p]$ and $\hat{v}_j=\sum_{q\in \mathbb{N}^n,|q|\leq \gamma}\hat{v}_{j,q}x^q$, where $\alpha=\beta+\gamma$.
The components of equation (\ref{eq1}) give a system of $(p(\sum_{1\leq s\leq \beta}{s+n-1\choose s}))$-linear equations as
 $$T\tilde{v}=\tilde{u},$$
 \noindent where $T$ is a $(p(\sum_{1\leq s\leq \beta}{s+n-1\choose s})) \times (r(\sum_{1\leq s'\leq \gamma}{s'+n-1\choose s'}))$ matrix of entries of coefficients of $w_j$ from $K$, $\tilde{v}$ is a vector of entries from the coefficients of $\hat{v}_j$ from $ K'$ and $\tilde{u}$ is a vector of entries from the coefficients of $\hat{u}_i$ from $ K'$. Now this is a system $L$ of linear equations with coefficients in $K$ and has a solution $(\hat{v}_{jq}), (\hat{u}_{il})$ in $K'$. We may consider in $L$ only variables corresponding to those $(\hat{v}_{jq}), (\hat{u}_{il})$ which are not zero. Since $K \to K'$ is a flat morphism  there exists a solution of $L$ in $K$ say $(v_{jq}), (u_{il})$. Thus we have $u=\sum_{l\in \mathbb{N}^n, |l|\leq \alpha}{u}_lx^l \in K[x]^p$, and ${v}_j=\sum_{q\in \mathbb{N}^n, |q|\leq \gamma}{v}_{jq}x^q \in K[x]$ such that $u=\sum_{j=1}^{r}w_jv_j$.\\

Assume that $\min_{i\in [p]}\ord \hat{ u}_i=\ord \hat{u}_{i_0}=c$ for some $i_0\in [p]$. Then there exists $\nu_1,\ldots,\nu_n\in \mathbb{N}$ such that $\sum_i\nu_i=c$ and  $\hat{u}_{i_0\nu} \neq 0$. If we have $u_{i_0\nu}=0$ for all solutions of $L$ from $K$ then we get a contradiction because $(\hat{u}_{il})$ is generated by the solutions of $L$ from $K$. Hence
$u_{i_0\nu}\not =0$. Since the coefficients of $u_i$ are zero when the corresponding coefficients of $\hat{ u}_i$ are zero we see that $u\in \mathcal{N}$ and  it  has no coefficients corresponding to ${\hat u}_{il} $ when $|l|<c$. Hence we have $\min_i\ord u_i=\ord u_{i_0}=c$.
\hfill\ \end{proof}

\begin{Proposition}\label{proploc.}
Let $K \to K'$ be an algebraically pure morphism of fields and $x=(x_1, \ldots , x_n)$. Let $J_i$, $i\in [p]$ be  subsets of $[n]$, $x_{J_i}=(x_k)_{k\in J_i}$ and $A_i=K[x_{J_i}]_{(x_{J_i})}$,  $A'_i=K'[x_{J_i}]_{(x_{J_i})}$, $i\in [p]$. Set
 $\mathcal{N}= A_1 \times \cdots \times A_p$ and
  $\mathcal{N'}=A'_1 \times \cdots \times A'_p$. Let $f$ be a system of polynomials from $K[x]_{(x)}[Y]$, $Y=(Y_1,\ldots,Y_p)$, and ${\hat y}\in \mathcal{N'}$,  such that $f({\hat y})=0$.
Then there exists $y\in \mathcal{N}$  such that $f(y)=0$ and   $\ord y_i=\ord \hat y_i$ for $i\in [p]$.
\end{Proposition}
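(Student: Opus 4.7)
The plan is to translate the existence of the required $y$ into the solvability over $K$ of a finite system of polynomial equations over $K$, and then to invoke the hypothesis that $K\to K'$ is algebraically pure. First, since each $\hat y_i\in K'[x_{J_i}]_{(x_{J_i})}$, I fix representations $\hat y_i=\hat p_i/\hat q_i$ with $\hat p_i,\hat q_i\in K'[x_{J_i}]$, $\hat q_i(0)=1$ and $\ord \hat p_i=c_i:=\ord \hat y_i$; choose a common integer $d$ bounding the degrees of all $\hat p_i$ and $\hat q_i$. Similarly, clear the $x$-denominators of each component $f_k$ of $f$, writing $f_k=g_k/h_k$ with $g_k\in K[x][Y]$, $h_k\in K[x]$ and $h_k(0)\neq 0$; then $f(\hat y)=0\iff g(\hat y)=0$, where $g=(g_1,\dots,g_q)$, and the same equivalence will hold for the eventual $y$ since $h_k$ is a unit in the relevant localizations.

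Next, introduce indeterminates $(P_{i,\mu})$ and $(Q_{i,\mu})$ for $i\in[p]$ and multi-indices $\mu$ with $\supp\mu\subset J_i$ and $|\mu|\leq d$, thought of as the coefficients of polynomials $P_i,Q_i\in K[x_{J_i}]$ of degree at most $d$. Pick $N\geq\max_k \deg_Y g_k$ and form
$$\widetilde g_k(x;P,Q):=\Bigl(\prod_{j\in[p]} Q_j^N\Bigr)\, g_k\bigl(P_1/Q_1,\dots,P_p/Q_p\bigr)\in K[x][(P_{i,\mu}),(Q_{i,\mu})].$$
Demanding that every coefficient of every $x$-monomial in each $\widetilde g_k$ vanish gives a finite set $E_1$ of polynomial equations over $K$. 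Adjoin the linear equations $Q_{i,0}=1$ (normalizing the denominator so that $Q_i(0)=1$) and $P_{i,\mu}=0$ for all $|\mu|<c_i$ (forcing $\ord P_i\geq c_i$). Because $\ord \hat p_i=c_i$, there exists a multi-index $\nu_i$ with $|\nu_i|=c_i$ such that $\hat p_{i,\nu_i}\neq 0$; adjoin auxiliary scalars $z_1,\dots,z_p$ with equations $z_iP_{i,\nu_i}=1$, yielding a finite polynomial system $\Sigma$ over $K$.

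By construction $\Sigma$ has a solution in $K'$, namely the coefficients of $\hat p_i,\hat q_i$ together with $z_i=\hat p_{i,\nu_i}^{-1}$. Since $K\to K'$ is algebraically pure, $\Sigma$ admits a solution over $K$; reading off $P_i,Q_i\in K[x_{J_i}]$, we obtain $Q_i(0)=1$ (so $Q_i$ is a unit in $A_i$) and $\ord P_i=c_i$. Setting $y_i:=P_i/Q_i\in A_i$, the equations $E_1$ force $\widetilde g_k(x;P,Q)=0$ in $K[x]$, and dividing by the unit $\prod_j Q_j^N$ gives $g_k(y)=0$, whence $f(y)=0$; moreover $\ord y_i=\ord P_i=c_i=\ord \hat y_i$.

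The genuinely delicate step is the strict equality $\ord y_i=c_i$ rather than a mere inequality: the lower bound $\geq c_i$ is linear, but the matching upper bound requires some coefficient of degree exactly $c_i$ to be nonzero, which is not itself a polynomial equation. The inverse-variable trick $z_iP_{i,\nu_i}=1$ encodes it polynomially, exploiting that $\hat p_i$ does supply at least one such nonzero coefficient; everything else is a routine finite-data encoding, and the nontrivial descent from $K'$ to $K$ is delivered by algebraic purity.
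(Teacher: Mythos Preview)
Your proof is correct and follows essentially the same approach as the paper: clear denominators to reduce to a polynomial system in $K[x]$, encode that system as a finite set of polynomial equations over $K$ in the coefficient variables, adjoin invertibility equations to force the correct order, and invoke algebraic purity to descend from $K'$ to $K$. The only minor difference is that the paper forces every nonzero coefficient of the numerator and denominator to remain nonzero (thus preserving the full support), whereas you more economically force only the vanishing below order $c_i$ together with the nonvanishing of a single witness coefficient $P_{i,\nu_i}$; both devices secure $\ord y_i=c_i$, and yours is slightly leaner.
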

\begin{proof}
Let $f \in K[x]_{(x)}[Y]$ be of the form $$f(Y)=\sum_{j\in \mathbb{N}^n,|j|< \alpha}w_{j}Y^j,$$ where $w_{j}$ are the coefficients of $f$ in $Y$ and belong to $K[x]_{(x)}$. Since ${\hat y}\in \mathcal{N'}$ is a solution of $f$,  we get
\begin{equation}\sum_{j\in \mathbb{N}^n,|j|< \alpha}w_{j}\hat{y}^j=0\end{equation}\label{eq2}
Let $\hat{y}=\frac{\hat{P}(x)}{\hat{Q}(x)}\in \mathcal{N'}$, where $\hat{P}(x)=(\hat{P}_1(x),\ldots,\hat{P}_p(x))$, $\hat{Q}(x)=(\hat{Q}_1(x),\ldots,\hat{Q}_p(x))$, $\hat{P}_i(x), \hat{Q}_i(x)\in K'[x_{J_i}] $,  and $\hat{Q}_i(x)\notin (x_{J_i})$ for all $i\in [p]$. Let $w_j=(w_{1j},\ldots,w_{pj})$, $w_{ij}=\frac{P'_{ij}(x)}{Q'_{ij}(x)}\in K[x]_{(x)}$, where $j\in \mathbb{N}^n,|j|< \alpha$, $Q'_{ij}(x)\in K[x]$ and $Q'_{ij}(x) \notin (x)$.
 Let $\mathbf{Q}'(x)$ be the least common multiple of $((Q'_{ij})(x))$ for all $j\in \mathbb{N}^n,|j|< \alpha$ and $\mathbf{\hat{Q}}(x)=  \Pi_{i\in [p]}(\hat{Q}_i(x))$. If we multiply the equation (2) by $\mathbf{Q}'(x)\mathbf{\hat{Q}}(x)^{\alpha}$ we get
 \begin{equation}\mathbf{Q}'(x)\mathbf{\hat{Q}}(x)^{\alpha}(\sum_{j\in \mathbb{N}^n,|j|< \alpha}w_{j}\hat{y}^j)=0  .\end{equation} \label{eq3}
 From equation (\ref{eq3}) we see that $f$ will be transformed in a different system of equations in $\hat{P}_i, \hat{Q}_i$, that is we change $f$ by another system in more $\hat{y}$ (in fact $2p$ will be the new $p$)  but this time they are from $K'[x_{J_1}] \times \ldots \times K'[x_{J_p}]$ for some bigger $p$ and we look for a solution from  $K[x_{J_1}] \times \ldots \times K[x_{J_p}]$. This new system will give a system of equations $F$ in the nonzero  coefficients $(\hat{y}_{iq})$ of $\hat{y}_i $ in $x$.
 Moreover we add for each $i,q$ with $\hat{y}_{iq}\not = 0$ a new equation $G_{iq}=Y_{iq}Y_{iq}^{'-1}-1$ which has a solution in $K'$ given by $\hat{y}_{iq}, \hat{y}_{iq}^{-1}$.
Thus $F,G=(G_{iq})$  must have a solution $(y_{iq}), (y'_{iq})$  also in $K$. Then
$$\{q\in \mathbb{N}^n:y_{iq}\not =0\}=\{q\in \mathbb{N}^n:{\hat y}_{iq}\not =0\}$$
for any $i\in [p]$.
 It follows that the new $y$ given by $y_i=\sum_q y_{iq}x^q$ satisfies $y\in K[x_{J_1}] \times \ldots \times K[x_{J_p}]\subset \mathcal{N}$,
 $f(y)=0$ and $\ord y_i=\ord\ \hat{y}_i$ for all $i\in [p]$.
\hfill\ \end{proof}

\begin{Corollary}\label{corloc.}
Let $K$ be an algebraically closed field, $K \subset K'$  a  field extension and $x=(x_1, \ldots , x_n)$. Let $J_i$, $i\in [p]$ be  subsets of $[n]$, $x_{J_i}=(x_k)_{k\in J_i}$ and $A_i=K[x_{J_i}]_{(x_{J_i})}$,  $A'_i=K'[x_{J_i}]_{(x_{J_i})}$, $i\in [p]$. Set
 $\mathcal{N}= A_1 \times \cdots \times A_p$ and
  $\mathcal{N'}=A'_1 \times \cdots \times A'_p$. Let $f$ be a system of polynomials from $K[x]_{(x)}[Y]$, $Y=(Y_1,\ldots,Y_p)$, and ${\hat y}\in \mathcal{N'}$,  such that $f({\hat y})=0$.
Then there exist $y\in \mathcal{N}$  such that $f(y)=0$ and   $\ord y_i=\ord \hat y_i$ for $i\in [p]$.
\end{Corollary}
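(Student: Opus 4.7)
\medskip

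The plan is to deduce the corollary directly from Proposition \ref{proploc.} by establishing that any field extension of an algebraically closed field is algebraically pure, and then invoking the proposition verbatim.

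First I would recall the definition: $K \to K'$ is algebraically pure if every finite system of polynomial equations with coefficients in $K$ that admits a solution in $K'$ also admits a solution in $K$. The key input is then the following standard consequence of Hilbert's Nullstellensatz. Given polynomials $g_1,\ldots,g_r \in K[X_1,\ldots,X_m]$ with a common zero $\xi \in K'^m$, the ideal $I=(g_1,\ldots,g_r)$ in $K[X]$ cannot contain $1$, since evaluating at $\xi$ would give $1=0$ in $K'$; equivalently, $I$ is a proper ideal. Because $K$ is algebraically closed, the Nullstellensatz yields $V(I) \cap K^m \neq \emptyset$, so the system has a solution already in $K$. This is precisely algebraic purity of $K \subset K'$, and it is the content of the remark made after Question \ref{q2} in the introduction.

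Once algebraic purity is in place, the hypotheses of Proposition \ref{proploc.} are satisfied: the field morphism $K \to K'$ is algebraically pure, and the other data ($x$, the subsets $J_i \subset [n]$, the local rings $A_i, A'_i$, the polynomial system $f$ and the solution $\hat y \in \mathcal{N'}$) are exactly what that proposition requires. Applying Proposition \ref{proploc.} produces $y \in \mathcal{N}$ with $f(y)=0$ and $\ord y_i = \ord \hat y_i$ for each $i \in [p]$, which is the desired conclusion.

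There is really no substantial obstacle: the corollary is a specialization, and the only nontrivial point is the passage from ``algebraically closed base'' to ``algebraically pure extension,'' which is a one-line application of the Nullstellensatz. I would present the proof in two short sentences: first verify algebraic purity as above, then cite Proposition \ref{proploc.}.
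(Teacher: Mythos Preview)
Your proposal is correct and matches the paper's own proof: the paper simply notes that a field extension of an algebraically closed field is algebraically pure (citing \cite[Corollary 1.8]{P0}) and then implicitly applies Proposition \ref{proploc.}. Your version supplies the Nullstellensatz justification explicitly rather than citing the reference, but the structure is identical.
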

For the proof note that a field extension of an algebraically closed  field is  an algebraically pure  field extension (see \cite[Corollary 1.8]{P0}).

\section{Properties on algebraic power series similar to the Artin approximation with constraints.}

\begin{Theorem}\label{t}
Let $K \to K'$ be an algebraically pure morphism of fields and $x=(x_1, \ldots , x_n)$. Let $J_i$, $i\in [p]$ be  subsets of $[n]$, $x_{J_i}=(x_k)_{k\in J_i}$ and $A_i=K\langle x_{J_i}\rangle$, resp. $A'_i=K'\langle x_{J_i}\rangle$, $i\in [p]$ be the algebraic power series in $x_{J_I}$ over $K$ resp. $K'$. Set
 $\mathcal{N}= A_1 \times \cdots \times A_p$ and
  $\mathcal{N'}=A'_1 \times \cdots \times A'_p$. Let $f$ be a system of polynomials from $K\langle x\rangle [Y]$, $Y=(Y_1,\ldots,Y_p)$, and ${\hat y}\in \mathcal{N'}$,  such that $f({\hat y})=0$.
Then there exist $y\in \mathcal{N}$  such that $f(y)=0$ and   $\ord y_i=\ord \hat y_i$ for $i\in [p]$.
\end{Theorem}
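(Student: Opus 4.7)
The approach extends that of Proposition~\ref{proploc.}, replacing the rational parametrization $\hat y_i=\hat P_i/\hat Q_i$ by an \emph{\'etale} (Hensel) parametrization: the idea is to encode the entire problem as a finite polynomial system over $K$ that admits a $K'$-solution, then to apply algebraic purity.

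First I would represent each $\hat y_i\in K'\langle x_{J_i}\rangle$ by \'etale data: polynomials $\Psi_i\in K'[x_{J_i},Z_{i,1},\ldots,Z_{i,m_i}]^{m_i}$, a point $a_i\in (K')^{m_i}$, and the unique Henselian solution $\hat z_i\in K'\langle x_{J_i}\rangle^{m_i}$ of $\Psi_i(x_{J_i},\hat z_i)=0$ with $\hat z_i(0)=a_i$, satisfying $\Psi_i(0,a_i)=0$, $\det(\partial\Psi_i/\partial Z_i)|_{(0,a_i)}\neq 0$, and $\hat y_i=\hat z_{i,1}$. Similarly I would represent the coefficients $w_\alpha\in K\langle x\rangle$ of $f$ by \'etale data $(\Phi,b)$ over $K$ with unique Hensel solution $\hat v\in K\langle x\rangle^M$, writing $f(Y)=G(x,\hat v,Y)$ for an explicit $G\in K[x,V,Y]$. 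The equation $f(\hat y)=0$ in $K'\langle x\rangle$ is then reformulated via the local ring $R=K'[x,V,Z]_{(x,V-b,Z-a)}$: since the combined Jacobian $\partial(\Phi,\Psi)/\partial(V,Z)$ is block-diagonal and invertible at $(0,b,a)$, $R/(\Phi,\Psi)$ is \'etale over $K'[x]_{(x)}$ with trivial residue field extension, so its Henselization is $K'\langle x\rangle$; by faithful flatness of Henselization the induced map $R/(\Phi,\Psi)\hookrightarrow K'\langle x\rangle$ is injective. Since $G(x,V,Z_{\cdot,1})\mapsto f(\hat y)=0$, this element lies in $(\Phi,\Psi)R$, and clearing denominators yields $u,\mu_j,\lambda_i\in K'[x,V,Z]$ with $u(0,b,a)\neq 0$ and
\[
u\cdot G(x,V,Z_{\cdot,1})=\sum_j\mu_j\Phi_j+\sum_i\lambda_i\Psi_i
\]
as an identity of polynomials in $K'[x,V,Z]$.

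Next I would write down a finite polynomial system $S$ over $K$ whose unknowns are the coefficients of $\Psi_i,u,\mu_j,\lambda_i$, the entries of $a_i$, and auxiliary inverse variables $\rho,\sigma_i,\tau_i$, with equations encoding: $\Psi_i(0,a_i)=0$; the Jacobian-inverse condition $\det(\partial\Psi_i/\partial Z_i)|_{(0,a_i)}\sigma_i=1$; the coefficient-wise expansion of the polynomial identity above; $u(0,b,a)\rho=1$; and order constraints --- using that the Taylor coefficients of the Hensel solution $z_i$ are polynomial expressions in the unknowns (via Hensel iteration), I add equations forcing the $x_{J_i}^\alpha$-coefficient of $z_{i,1}$ to vanish for $|\alpha|<\ord\hat y_i$ and to be invertible (via $\tau_i$) for a chosen $\alpha_0$ of degree $\ord\hat y_i$ at which $\hat y_i$ has a nonzero coefficient. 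All equations have coefficients in $K$, and $S$ admits a $K'$-solution by construction; algebraic purity of $K\to K'$ yields a $K$-solution. Hensel's lemma in $K\langle x_{J_i}\rangle$ then produces $z_i\in K\langle x_{J_i}\rangle^{m_i}$; setting $y_i:=z_{i,1}$ and substituting $V\mapsto\hat v$, $Z\mapsto z$ in the $K$-version of the identity gives $u^{(K)}(x,\hat v,z)\cdot f(y)=0$ with $u^{(K)}(x,\hat v,z)$ a unit in $K\langle x\rangle$ (its constant term $u^{(K)}(0,b,a^{(K)})$ is nonzero), so $f(y)=0$, while the order equations force $\ord y_i=\ord\hat y_i$.

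The main obstacle is the embedding $R/(\Phi,\Psi)\hookrightarrow K'\langle x\rangle$ and the clean identification of the Henselization of $R/(\Phi,\Psi)$ with $K'\langle x\rangle$; these are standard facts about \'etale extensions of Henselian local rings and faithful flatness of Henselization, but they require careful setup. Once these are in place, encoding the order condition via Hensel iteration and invoking algebraic purity become routine extensions of the argument for Proposition~\ref{proploc.}.
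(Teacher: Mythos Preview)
Your proposal is correct and follows the same overall strategy as the paper: encode the algebraic power series $\hat y_i$ and the coefficients of $f$ by \'etale (Hensel) data, translate the vanishing $f(\hat y)=0$ together with the Jacobian and order constraints into a finite system of polynomial equations over $K$ that has a solution in $K'$, invoke algebraic purity, and rebuild the solution $y$ over $K$ via Hensel's lemma.

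The packaging differs. The paper presents each $\hat y_i$ in a one--variable \'etale neighborhood $U_i=(K'[x_{J_i}][T_i]/(F_i))_{G_i}$, then explicitly factors all the $U_i$ and the coefficient algebra $V$ through a common \'etale $K'[x]$-algebra $U=(K'[x][T]/(F))_G$; the resulting conditions (i), (ii), (i$'$), (ii$'$), (iii) on the transition maps $\phi_i\colon U_i\to U$, $\psi\colon V\to U$ are then expanded coefficient by coefficient into the systems $H_i,\dots,H_{iii}$, and further systems $\Lambda,\Delta,\Gamma$ enforce commutativity of the diagrams and the order constraints. You instead take a multi-variable Hensel presentation $(\Psi_i,a_i)$ for each $\hat y_i$, amalgamate everything at once into the local ring $R/(\Phi,\Psi)$, and use faithful flatness of Henselization to obtain a single polynomial identity $u\cdot G=\sum\mu_j\Phi_j+\sum\lambda_i\Psi_i$ in $K'[x,V,Z]$; this replaces the paper's intermediate algebra $U$ and its diagram-commutativity equations $\Lambda$ by a cleaner conceptual step. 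Your treatment of the order condition (Taylor coefficients of the Hensel lift are polynomials in the data, so one can impose vanishing below $c_i$ and invertibility at a chosen monomial of degree $c_i$) matches the paper's system $\Gamma$. The two arguments are thus equivalent in substance; yours is a more compact repackaging that avoids the explicit factorization through $U$, at the cost of invoking the standard facts that an \'etale local algebra over $K'[x]_{(x)}$ with trivial residue extension has Henselization $K'\langle x\rangle$ and that the Henselization map is faithfully flat (hence injective).
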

\begin{proof}
As $\hat y_i\in A'_i$ and $A'_i$ is a filtered inductive limit of etale neighborhoods of $K'[x_{J_i}]_{(x_{J_i})}$ we may find $y'_i$ in an etale $K'[x_{J_i}]$-algebra $U_i$ such that $\hat{y}_i$ is the image of $y'_i$ by the limit map $\rho_i:U_i\to K'\langle x_{J_i}\rangle$. Let us say $y'_i\in U_i=
  (K'[x_{J_i}][T_i]/(F_i))_{G_i}$, where $F_i$ is monic in $T_i$ and $G_i$ is a multiple of $\partial F_i/\partial T_i$.  The images of  $\rho_i$ are contained in an etale neighborhood of $K'[x]_{(x)}$ and so they factor through an etale $K'[x]$-algebra, let us say through
$U= (K'[x][T]/(F))_G$,  where $F$ is monic in $T$ and $G$ is a multiple of $\partial F/\partial T$.
 Suppose that the $K'[x_{J_i}]$-morphism $\varphi_i:U_i\to U$
is given by $T_i\to a'_i\in U$. It follows that $F_i(a'_i)=0$ and $G_i(a'_i) $ is invertible in $U$, that is

i) $G^rF_i(a'_i)\in (F)$,

ii) $G_i(a'_i)| G^r$ modulo $(F)$

\noindent for some $r\in \mathbb{N}$.

Note that the coefficients of $f$ belong to an etale neighborhood of $K[x]_{(x)}$  and so we may consider the coefficients of $f$ as images of some elements of an etale $K[x]$-algebra $V$ by the limit map $\theta:V\to K\langle x\rangle$. Let us say $V=(K[x,\tilde T]/(\tilde F))_{\tilde G}$,  where $\tilde F$ is monic in $\tilde T$ and $\tilde G$ is a multiple of $\partial \tilde F/\partial \tilde T$. More precisely, assume that $f=\sum_{j< \beta}w_{j}Y^j$,  $w_j\in K\langle x\rangle$ and choose $w'_j\in V$ with $\theta (w'_j)=w_j$. Set  $f'=\sum_{j< \beta}w'_{j}Y^j\in V[Y]$.
We may enlarge $U$ such that the composite map $V\to K\langle x\rangle\to
 K'\langle x\rangle$ factors through $U$, let us say the $K[x]$-morphism $\psi :V\to U$ is given by $\tilde T\to b'\in U$.

Let $\rho:U\to K'\langle x\rangle$ be the corresponding limit map.
  We have the following commutative diagram.

 $$
  \begin{xy}\xymatrix{K[x]  \ar[d]\ar[r] & V \ar[d]^{\psi} \ar[r]^{\theta} & K\langle x\rangle \ar[d]\\
K'[x] \ar[r] & U \ar[r]^{\rho} & K'\langle x\rangle \\
K'[x_{J_i}]\ar[u]\ar[r]  & U_i  \ar[u]_{\phi_i} \ar[r]^{\rho_i} &K'\langle x_{J_i}\rangle
 \ar[u] } \end{xy}
  $$

 We have

 i') $G^r\tilde F(b')\in (F)$,

ii') $\tilde G(b')| G^r$ modulo $(F)$

\noindent possibly by increasing  $r$. By multiplying $f'$ by a certain power of $\tilde G$ we may consider that the coefficients of $f'$ are in $K[x,\tilde T]$.  Changing again $U$ if necessary, we have \\
$f'(\psi(\tilde T),\varphi_1(y'_1),\ldots, \varphi_p(y'_p))=0$ in $U$. Replacing
 $y'_i$ by $\bar{y}'_i/G_i^{r_i}$ modulo $F_i$ for some $r_i\in {\mathbb{N}}$,
 $\bar{y}'_i\in K'[x_{J_i},T_i]$ we may change $f'$ into a system of polynomials $\tilde f$ in $Y, Y'=(Y'_1,\ldots,Y'_p)$ such that  $$G^r(\Pi_{i=1}^pG_i(a'_i)^rf'(b',y'_1(a'_1),\ldots, y'_p(a'_p))=
\tilde{f}(b',\bar{y}'_1(a'_1),\ldots,\bar{ y}'_p(a'_p),G_1(a'_1),\ldots,G_p(a'_p)),$$
increasing again $r$ if necessary. It follows that

 iii) $\tilde{f}(b',\bar{y}'_1(a'_1),\ldots,\bar{ y}'_p(a'_p),G_1(a'_1),\ldots,G_p(a'_p))\in (F).$

\noindent Note that $\tilde f$ depends on $\bar{y}'_i, a'_i, G_i$.     We have the following commutative diagram.

$$
  \begin{xy}\xymatrix{V  \ar[d]^{\theta}\ar[r] & V[Y] \ar[d] \ar[r] & U \ar[d]^{\rho}\\
K\langle x\rangle \ar[r] & K\langle x\rangle[Y] \ar[r] & K'\langle x\rangle \\
K\langle x_{J_i}\rangle \ar[u]\ar[r]  & K\langle x_{J_i}\rangle [Y_i]  \ar[u] \ar[r] &K'\langle x_{J_i}\rangle
 \ar[u] } \end{xy}
  $$

 Next we note that i),ii), i'),ii'),iii) means that the coefficients from $K'$ of $y'_i$, $a'_i$, $b'$, $G_i$, $G$, $F_i$, $F$ are solutions in $K'$ of a certain   system $H$ of polynomial equations over $K$.\\

Let $F_i=\sum_{l_i=1}^{d_i}\bar{F}_{i,l_i}T_i^{l_i}$ where $\bar{F}_{i,l_i}\in K'[x_{J_i}]$ with $\bar{F}_{i,d_i}=1$, $F=\sum_{l=1}^{d}\bar{F}_{l}T^{l}$ where $\bar{F}_{l}\in K'[x]$ with $\bar{F}_d=1$, and $G_i=\sum_{k_i=1}^{m}\bar{G}_{i,k_i}T_i^{k_i}$ where $\bar{G}_{i,k_i}\in K'[x_{J_i}]$, $G=\sum_{k=1}^{m}\bar{G}_{k}T^{k}$ where $\bar{G}_{k}\in K'[x]$ for some $m>>0$. \\

i) Since we have $G^rF_i(a'_i)\in (F)$, where $a'_i=\sum_{s_i=1}^{d-1}a'_{i,s_i}T^{s_i}$, $a'_{i,s_i}\in K'[x]$ it follows that
$$(\sum_{k=1}^{m}\bar{G}_{k}T^{k})^r\sum_{l_i=1}^{d_i}\bar{F}_{i,l_i}(\sum_{s_i=1}^{d}a'_{i,s_i}T^{s_i})^{l_i}=FP$$ for some $P=\sum_{c=1}^{m}{\bar P}_{c}T^{c} \in K'[x][T]$ increasing $m$ if necessary. This  equation has the form:
$$(\sum_{k\leq m,k'\in \mathbb{N}^n,|k'|<\alpha}{\bar{G}}_{k,k'}x^{k'}T^{k})^r(\sum_{l_i<d_i, l'_i\in \mathbb{N}^n, |l'_i|<\alpha}{\bar{F}}_{i,l_i,l'_i}x_{J_i}^{l'_i}(\sum_{s_i< d,s'_i\in \mathbb{N}^n, |s'_i|<\alpha}\bar{a}'_{i,s_i,s'_i}x^{s'_i}T^{s_i})^{l_i})=$$
$$(\sum_{l< d,l'\in \mathbb{N}^n, |l'|<\alpha}{\bar{F}_{l,l'}x^{l'}T^{l})(\sum_{c\leq m,c'\in \mathbb{N}^n, |c'|<\alpha}}{\bar P}_{c,c'}x^{c'}T^{c})$$
for some $\alpha >>0$. Here $\bar{G}_k=\sum_{k'\in \mathbb{N}^n, |k'|<\alpha}\bar{G}_{kk'}x^{k'}$, $\bar{G}_{kk'}\in K'$ and similarly for others.
This gives  a system of polynomial equations  $H_{i}$ over $K$ which  has a solution in $K'$, namely $({\bar{G}}_{k,k'}),({\bar{F}}_{i,l_i,l'_i}),(\bar{a}'_{i,s_i,s'_i}),({\bar{F}}_{l,l'}),({\bar P}_{c,c'})$.

ii) Since $G_i(a'_i)| G^r$ modulo $(F)$,  there exist $Q,L \in K[x][T]$ such that $G_i(a'_i).Q=G^r+F.L$. We may assume that $Q=\sum _{q=1}^{m}{\bar Q}_qT^q$ and $L=\sum _{t=1}^{m}{\bar L}_tT^t$ increasing $m$ if necessary. Then we get:
$$(\sum_{k_i\leq m,k'_i\in \mathbb{N}^n, |k'_i|<\alpha}{\bar{G}}_{i,k_i,k'_i}x^{k'_i}(\sum_{s_i<d,s'_i\in \mathbb{N}^n, |s'_i|<\alpha}\bar{a}'_{i,s_i,s'_i}x^{s'_i}T^{s_i})^{k_i})(\sum_{q\leq m,q'\in \mathbb{N}^n, |q'|<\alpha}{\bar Q}_{q,q'}x^{q'}T^q)=$$
$$(\sum_{k\leq m,k'\in \mathbb{N}^n, |k'|<\alpha}\bar{G}_{k,k'}x^{k'}T^{k})^r+(\sum_{l< d,l'\in \mathbb{N}^n, |l'|<\alpha}\bar{F}_{l,l'}x^{l'}T^{l})(\sum_{t\leq m,t'\in \mathbb{N}^n, |t'|<\alpha}{\bar L}_{t,t'}x^{t'}T^t)$$
increasing $m,\alpha$ if necessary.
This gives  a system of polynomial equations  $H_{ii}$ over $K$ which  has a solution in $K'$, namely
 $({\bar{G}}_{i,k_i,k'_i}),(\bar{a}'_{i,s_i,s'_i}),({\bar Q}_{q,q'}),({\bar{F}}_{l,l'}),({\bar L}_{t,t'}),({\bar{G}}_{k,k'})$.\\

Let $\tilde{F}=\sum_{\tilde{l}=1}^{\tilde{d}}\bar{\tilde{F}}_{\tilde{l}}T^{\tilde{l}}$ where $\bar{\tilde{F}}_{l}\in K[x]$ with $\bar{\tilde{F}}_{\tilde{d}}=1$, $\tilde{G}=\sum_{\tilde{k}=1}^m\bar{\tilde{G}}_{\tilde{k}}T^{\tilde{k}}$ where $\bar{\tilde{G}}_{\tilde{k}}\in K'[x]$.

i') Since we have $G^r\tilde {F}(b')\in (F)$, where $b'=\sum_{\tilde{s}=1}^{d-1}b'_{\tilde{s}}T^{\tilde{s}}$,  we have
$$(\sum_{k=1}^{m}\bar{G}_{k}T^{k})^r\sum_{\tilde{l}=1}^{\tilde{d}}\bar{\tilde{F}}_{\tilde{l}}(\sum_{\tilde{s}=1}^db'_{\tilde{s}}T^{\tilde{s}})^{\tilde{l}}=F\tilde{P}$$ for some $\tilde{P}=\sum_{\tilde{c}=1}^m\bar{\tilde{P}}_{\tilde{c}}T^{\tilde{c}} \in K'[x][T]$ increasing $m$ if necessary. This equation has the form:
$$(\sum_{k\leq m,k'\in \mathbb{N}^n, |k'|<\alpha}\bar{G}_{k,k'}x^{k'}T^{k})^r(\sum_{\tilde{l}=1}^{\tilde{d}}(\sum_{\tilde{l}'\in \mathbb{N}^n, |\tilde{l}'|<\alpha}\bar{\tilde{F}}_{\tilde{l}\tilde{l}'}x^{\tilde{l}'}(\sum_{\tilde{s}< d,\tilde{s}'\in \mathbb{N}^n, |\tilde{s}'|<\alpha}\bar{b}'_{\tilde{s},\tilde{s'}}x^{\tilde{s'}}T^{\tilde{s}})^{\tilde{l}}))=$$
$$(\sum_{l< d,l'\in \mathbb{N}^n,  |l'|<\alpha}\bar{F}_{l,l'}x^{l'}T^{l})(\sum_{\tilde{c}\leq m,\tilde{c'}'\in \mathbb{N}^n, |\tilde{c}'|<\alpha}\bar{\tilde{P}}_{\tilde{c},\tilde{c}'}x^{\tilde{c}'}T^{\tilde{c}}).$$
 Here $\bar{\tilde{P}}_{\tilde{c}}=\sum_{\tilde{c'}'\in \mathbb{N}^n, |\tilde{c}'|<\alpha}\bar{\tilde{P}}_{\tilde{c}\tilde{c}'}x^{\tilde{c}'}$, $\bar{\tilde{P}}_{\tilde{c}\tilde{c}'}\in K'[x]$ and similarly for others.
It gives a system of polynomial equations $H_{i'}$ over $K$ which has a solution in $K'$, namely $(\bar{G}_{k,k'}),(\bar{b}'_{\tilde{s},\tilde{s'}}),(\bar{F}_{l,l'}),(\bar{\tilde{P}}_{\tilde{c},\tilde{c}'})$.

ii') Since $\tilde G(b')| G^r$ modulo $(F)$,  there exist $\tilde{Q},\tilde{L} \in K'[x][T]$ such that $\tilde{G}(b').\tilde{Q}=G^r+F.\tilde{L}$. We may choose $\tilde{Q}=\sum _{\tilde{q}=1}^{m}\bar{\tilde{Q}}_{\tilde{q}}T^{\tilde{q}}$ and $\tilde{L}=\sum_{\tilde{t}=1}^{m}\bar{\tilde{L}}_{\tilde{t}}T^{\tilde{t}}$ increasing $m$ if necessary. Now this equation has the following form:
$$\sum_{\tilde{k}\leq \tilde{m},\tilde{k}'\in \mathbb{N}^n, |\tilde{k}'|<\alpha}\bar{\tilde{G}}_{\tilde{k},\tilde{k}'}x^{\tilde{k}'}(\sum_{\tilde{s}< d,\tilde{s}'\in \mathbb{N}^n, |\tilde{s}'|<\alpha}\bar{b}'_{\tilde{s},\tilde{s}'}x^{\tilde{s}'}T^{\tilde{s}})^{\tilde{k}}.(\sum_{\tilde{q}\leq m,\tilde{q}'\in \mathbb{N}^n, |\tilde{q}'|<\alpha}\bar{\tilde{Q}}_{\tilde{q},\tilde{q}'}x^{\tilde{q}'}T^{\tilde{q}})$$=
$$(\sum_{k\leq m,\tilde{k}'\in \mathbb{N}^n, |\tilde{k}'|<\alpha}\bar{G}_{k,k'}x^{k'}T^{k})^r+(\sum_{l< d,l'\in \mathbb{N}^n, |l'|<\alpha}\bar{F}_{l,l'}x^{l'}T^{l})(\sum_{\tilde{t}\leq m,\tilde{t}'\in \mathbb{N}^n, |\tilde{t}'|<\alpha}\bar{\tilde{L}}_{\tilde{t},\tilde{t}'}x^{\tilde{t}'}T^{\tilde{t}})$$ increasing $\alpha$ if necessary.
This gives a system of polynomial equations $H_{ii'}$ over $K$ which has a solution in $K'$, namely $(\bar{b}'_{\tilde{s},\tilde{s}'}),(\bar{F}_{l,l'}),(\bar{\tilde{L}}_{\tilde{t},\tilde{t}'}),(\bar{G}_{k,k'}),(\bar{\tilde{Q}}_{\tilde{q},\tilde{q}'})$ .\\

 iii) Since $\tilde{f}(b',\bar{y}'_1(a'_1),\ldots,\bar{ y}'_p(a'_p),G_1(a'_1),\ldots,G_p(a'_p))\in (F)$ in $K'[x,T]$, there exists $P=
 \sum_{c\leq m,c'\in \mathbb{N}^n, |c'|<\alpha}{\bar P}_{c,c'}x^{c'}T^{c}\in K'[x,T]$ (increasing $m,\alpha$ if necessary) such that
$$\tilde{f}(b',\bar{y}'_1(a'_1),\ldots,\bar{ y}'_p(a'_p),G_1(a'_1),\ldots,G_p(a'_p))=$$
$$(\sum_{l< d, l'\in \mathbb{N}^n, |l'|<\alpha}{\bar{F}}_{l,l'}x^{l'}T^{l})(\sum_{c\leq m,c'\in \mathbb{N}^n, |c'|<\alpha}{\bar P}_{c,c'}x^{c'}T^{c})$$
 where $y'_i=\sum_{j_i,q_i\in \mathbb{N}^n, |q_i|<\alpha, k_i<d_i}{\bar y}'_{i,k_i,q_i}x^{q_i}T_i^{k_i}$, ${\bar y}'_{i,k_i,q_i}\in K'$.
This gives a system of  polynomial equations $H_{iii}$ over $K$ which has a solution in $K'$, namely $(\bar{G}_{i,k_i,k'_i}),(\bar{F}_{l,l'}),$
$(\bar{b}'_{\tilde{s},\tilde{s'}}),(\bar{a}'_{i,s_i,s'_i})$, $(\bar{\tilde{P}}_{\tilde{c},\tilde{c}'})$,
 $(\bar{y}'_{i,k_i,q_i})$.

A solution of $H=H_i\cup H_{ii}\cup H_{i'}\cup H_{ii'}\cup H_{iii}$ in $K$ will define some elements $\tilde a_i$, $\tilde b$, $ \tilde y_i$ over $K$, some polynomials $\tilde F_i$, $\tilde G_i$,
 $\tilde F'$, $\tilde G'$ over $K$, some etale algebras $\tilde U_i= (K[x_{J_i}][T_i]/(\tilde F_i))_{\tilde G_i}$, $\tilde U= (K[x][T]/(\tilde F'))_{\tilde G'}$ and some maps $\tilde \phi_i:\tilde U_i\to \tilde U$,
 $\tilde \rho_i:\tilde U_i\to K\langle x_{J_i}\rangle $,  $\tilde \rho:\tilde U\to K\langle x\rangle $, and similar $\tilde{\psi}$, $\tilde{\phi}$ which make commutative two diagrams similar as above but written for the case $K=K'$
if we can show that $\tilde{\rho}_i$ and $\tilde{\theta}$ are given by $\tilde{\rho}\tilde{\phi}_i$, resp. $\tilde{\rho}\tilde{\psi}$.

 Suppose that $\rho_i$, $\rho$, $\theta$ are given by  $\rho_i(T_i)=\hat{z}_i=\sum_k \hat{z}_{ik} x^k$, $\rho(T)=\bar{z}=\sum_k \bar{z}_{k} x^k$,  $\theta(\tilde{T})=z'=\sum_k z'_{k} x^k$. Then we have $a'_i(\bar{z})=\hat{z}_i$, $b'(\bar{z})=z'$ and so $(\bar{G}_{i,k_i,k'_i}),(\bar{F}_{l,l'}),$
$(\bar{b}'_{\tilde{s},\tilde{s'}}),(\bar{a}'_{i,s_i,s'_i})$, $(\bar{\tilde{P}}_{\tilde{c},\tilde{c}'})$,
 $(\bar{y}'_{i,k_i,q_i})$,  $(\bar{z}_k)$, $(\hat{z}_k)$, $(\bar{b}'_{s,s'})$, $(z'_k)$ is a solution of a certain system of polynomial equations $(\Lambda_k)_{k\in \mathbb{N}^n}$. A solution of $H\cup (\Lambda_k)_{k\in \mathbb{N}^n}$ will define the maps $\tilde{\rho}$, $\tilde{\rho}_i$, $\tilde{\theta}$, $\tilde{\psi}$, $\tilde{\phi}$ which make indeed commutative the two diagrams above written for $K'=K$.

But $(\Lambda_k)_{k\in \mathbb{N}^n}$ is an infinite set of equations and we have to see that just a finite set of them are actually enough.
 Set $\ord \hat{z}_i=\nu_i$, $\ord \bar{z}=\bar{\nu}$, $\ord z'=\nu'$ and $\mu=\max\{(\nu_i),\bar{\nu},\nu'
\}$. Using the unicity of the Implicit Function Theorem ($V,U,U_i$ are etale) we see that $(\hat{z}_{ik})$, $\bar{z}_k$, $(z'_k)$  are uniquelly defined
 by $(\hat{z}_{ik})_{|k|\leq \mu}$, $(\bar{z}_k)_{|k|\leq \mu}$ $(z'_k)_{|k|\leq \mu}$. Taken $\Lambda=(\Lambda_k)_{|k|\leq\mu}$ we see that the two above diagrams are commutative, that is $a'_i(\bar{z})=\hat{z}_i$, $b'(\bar{z})=z'$ if and only if the solution of $H$ in $K$ is also a solution  of $ \Lambda$, providing that the order of the corresponding $\tilde{\rho}(T)$, $\tilde{\rho}(T_i)$, $\tilde{\theta}(\tilde{T})$ is  $\leq \mu$. Assume that
$\hat{z}_{i,\hat{\tau}_i}\not =0$, $\bar{z}_{\bar{\tau}}\not =0$, $z'_{\tau'}\not =0$ for some $\hat{\tau}_i$, $\bar{\tau}$, $\tau'$ with $|\hat{\tau}_i|=\nu_i$, $|\bar{\tau}|=\bar{\nu}$, $|\tau'|=\nu'$.  Then  $\hat{z}_{i,\hat{\tau}_i}$, $\bar{z}_{\bar{\tau}}$, $z'_{\tau'}$ and their inverses satisfy also the system $\Delta$ of equations $\hat{Z}_{i,\hat{\tau}_i } \hat{Z}_{-1}=1$,  $\bar{Z}_{\bar{\tau} } \bar{Z}_{-1}=1$,  $Z'_{\tau' } Z'_{-1}=1$. Asking for a solution of $H\cup \Lambda\cup \Delta$ in $K$ means to get  the maps $\tilde{\rho}$, $\tilde{\rho}_i$, $\tilde{\theta}$, $\tilde{\psi}$, $\tilde{\phi}$ satisfying the two above commutative  diagrams written for $K'=K$.

  Then $y_i=\tilde\rho_i(\tilde y_i)$ form  a solution of $f$ in $K\langle x\rangle$ which is in $\mathcal{N}$. Unfortunately, $\ord y_i\not=\ord \hat y_i$ in general and to get equality we must choose the solution of $H\cup \Lambda\cup \Delta$ more carefully, that is satisfying also some other system of equations.

Note that there exists a system of polynomials $(\Gamma_{ij})_{j\in \mathbb{N}^n}$ in some variables $Y', Z$ such that $\hat{y}_i=\rho_i(y'_i)=y'_i(T_i=\hat{z})=
\sum_{j\in \mathbb{N}^n}\Gamma_{ij}((\bar{y}'_{i,k_i,q_i}),(\hat{z}_k))x^j$.
Then\\
 $\Gamma_{ij}((\bar{y}'_{i,k_i,q_i}), (\hat{z}_{ik}))=0$ for all $j$ with $|j|<c_i$ and  $\Gamma_{i\gamma}((\bar{y}'_{i,k_i,q_i}), (\hat{z}_{ik}))\not=0$
for some $\gamma$ with $|\gamma|=c_i$. Note that only a finite number
$(\bar{y}'_{i,k_i,q_i}), (\hat{z}_{ik})$, let us say for $q_i,k\in \mathbb{N}^n$,$|q_i|,|k|<\omega$, will enter in $\Gamma_{ij}$ when $|j|\leq c_i$. We may suppose that $\omega\geq \mu$. Then $(\bar{y}'_{i,k_i,q_i})_{k_i\leq d_i,|q_i|<\omega}$,
$ (\hat{z}_{ik})_{|k|<\omega}$ and some $\hat{z}_{i\gamma}''$ is a solution of the system $\Gamma_i$ given by the polynomial equations
$$\Gamma_{ij}((Y'_{i,k_i,q_i})_{k_i\leq d_i,|q_i|<\omega}, (Z_{ik})_{|k|<\omega})=0,\ \ |j|<c_i,$$
$$\Gamma_{i\gamma}((Y'_{i,k_i,q_i})_{k_i\leq d_i,|q_i|<\omega}, (Z_{ik})_{|k|<\omega}) Z''_{i\gamma}=1.$$
Thus
$(\bar{G}_{i,k_i,k'_i})$, $(\bar{F}_{l,l'}),$ $(\bar{G}_{kk'})$, 
$(\bar{F}_{i,l_i,l'_i}),$ $(\bar{G}_{kk'}$
$(\bar{b}'_{\tilde{s},\tilde{s'}}),(\bar{a}'_{i,s_i,s'_i})$, $(\bar{\tilde{P}}_{\tilde{c},\tilde{c}'})$, $(\bar{\tilde{L}}_{\tilde{t}\tilde{t}'})$,  $(\bar{\tilde{Q}}_{\tilde{q}\tilde{q}'})$
 $(\bar{y}'_{i,k_i,q_i})_{|q_i|\leq \omega}$,
 $(\hat{z}_{ik})_{|k|<\omega}$, $(\hat{z}''_{i\gamma})$, $(\bar{z}_k)$,  $(z'_k)$, $\hat{z}_{-1}$,$\bar{z}_{-1}$, $z'_{-1}$  is a solution of $H\cup \Lambda\cup \Delta\cup \Gamma$, $\Gamma=(\Gamma_i)_{i\in [p]}$ in $K'$.
Choosing a solution of $H\cup \Lambda\cup \Delta\cup \Gamma$
in $K$  we see that $y_i=\tilde{\rho}_i(y'_i)$ satisfies also  $\ord y_i=\ord \hat y_i$, $i\in [p]$.
\hfill\ \end{proof}

\begin{Corollary}\label{corloc.}
Let $K$ be an algebraically closed field, $K \subset K'$  a  field extension and $x=(x_1, \ldots , x_n)$. Let $J_i$, $i\in [p]$ be  subsets of $[n]$, $x_{J_i}=(x_k)_{k\in J_i}$ and $A_i=K\langle x_{J_i}\rangle$, resp. $A'_i=K'\langle x_{J_i}\rangle$, $i\in [p]$ be the algebraic power series in $x_{J_i}$ over $K$ resp. $K'$. Set
 $\mathcal{N}= A_1 \times \cdots \times A_p$ and
  $\mathcal{N'}=A'_1 \times \cdots \times A'_p$. Let $f$ be a system of polynomials from $K\langle x\rangle [Y]$, $Y=(Y_1,\ldots,Y_p)$, and ${\hat y}\in \mathcal{N'}$,  such that $f({\hat y})=0$.
Then there exist $y\in \mathcal{N}$  such that $f(y)=0$ and   $\ord y_i=\ord \hat y_i$ for $i\in [p]$.
 \end{Corollary}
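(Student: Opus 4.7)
The proof plan is essentially a one-line reduction to Theorem \ref{t}. The only thing to verify is that the hypothesis of Theorem \ref{t} (namely, that $K \to K'$ is an algebraically pure morphism of fields) is satisfied under the weaker hypothesis here (that $K$ is algebraically closed and $K \subset K'$ is any field extension). This is exactly the same reduction that was used for the previous \textbf{Corollary} in Section 1, where the authors cite \cite[Corollary 1.8]{P0}.

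So the plan is: first invoke \cite[Corollary 1.8]{P0} to conclude that any field extension $K \subset K'$ of an algebraically closed field $K$ is algebraically pure. Then apply Theorem \ref{t} verbatim to the data $(K, K', x, J_i, A_i, A'_i, \mathcal{N}, \mathcal{N}', f, \hat{y})$, which produces the required $y \in \mathcal{N}$ with $f(y) = 0$ and $\ord y_i = \ord \hat{y}_i$ for all $i \in [p]$. There is no genuine obstacle; the corollary is just the algebraically-closed-base-field instance of Theorem \ref{t}, with the algebraic purity supplied automatically by the algebraic closure hypothesis (which amounts to the existential closedness of $K$ in $K'$ for positive-dimensional systems, and is essentially Hilbert's Nullstellensatz combined with the fact that any $K$-point of an irreducible variety over $K$ can be lifted from a $K'$-point by Chevalley's theorem on constructible sets, as exploited in \cite{P0}).
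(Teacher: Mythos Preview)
Your proposal is correct and matches the paper's own proof essentially verbatim: the paper simply notes that a field extension of an algebraically closed field is algebraically pure (citing \cite[Corollary 1.8]{P0}) and then applies Theorem~\ref{t}. Your additional remarks about the Nullstellensatz and Chevalley's theorem are just an informal unpacking of that cited fact and do not change the approach.
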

For the proof note that a field extension of an algebraically closed  field is  an algebraically pure  field extension (see \cite[Corollary 1.8]{P0}).

\section{Ultrapower and Nested Strong Artin Approximation}
A {\em filter} on $\mathbb{N}$ is a non-empty family $D$ of subsets of $\mathbb{N}$ satisfying
\begin{enumerate}
  \item $\emptyset \notin D$,
  \item if $s,t \in D$ then $s \cap t \in D$,
  \item if $s \in D$ and $s\subset t \subset \mathbb{N}$ then $t \in D$.
\end{enumerate}
An {\em ultrafilter} on $\mathbb{N}$ is a maximal filter in the set of filters on $\mathbb{N}$ with respect to inclusion. A filter $D$ is an ultrafilter if and only if $\mathbb{N}\setminus s \in D$ for each subset $s \subset \mathbb{N}$ such that $s \notin D$.
It follows that $s\cup t=\mathbb{N}$ implies $s\in D$ or $t\in D$ because $D$ is an ultrafilter.
 An ultrafilter is called nonprincipal if there exist no $r$ such that $D=\{s|r \in s \subset \mathbb{N}\}$. More precisely, an ultrafilter is nonprincipal if and only if it contains the filter of all cofinite sets of $\mathbb{N}$.

Let $(A_i)_{i \in \mathbb{N}}$ be a family of rings and $\mathcal{I}=\{(x_i)_{i \in \mathbb{N}}\in \prod_{i \in \mathbb{N}}A_i|\{i|x_i=0\}\in D\}$ an ideal in $\prod_{i \in \mathbb{N}}A_i$. We call the factor ring $\prod_{i \in \mathbb{N}}A_i/\mathcal{I}$ the {\em ultraproduct} of the family $(A_i)_{i \in \mathbb{N}}$ with respect to the  ultrafilter $D$. If $A_i=A$ for all $i\in \mathbb{N}$, then denote $A^*=\prod_{i \in \mathbb{N}}A_i/\mathcal{I}$ and call $A^*$ the {\em ultrapower} of $A$ with repect to the ultrafilter $D$. An element $a \in A^*$ has the form $[(a_i)_{i \in \mathbb{N}}]$, $a_i \in A$,  where ``[ ]" means the class modulo $\mathcal{I}$. We recall below some properties of the ultrapower given in \cite{BDLV}, \cite{P0}, \cite{P}, \cite{P1}.

If $A$ is a local ring with $\mm$ its maximal ideal, then $\mm^*=\{[(x_i)_{i \in \mathbb{N}}]| \{i_i\in \mathbb{N}|x_i \in \mm\}\in D\}$ is a maximal ideal in $A^*$ and the unique one. If $K$ is the residue  field of $A$ then the residue field of $A^*$ is the ultrapower $K^*$ of $K$ with respect to the ultrafilter $D$.

\begin{Theorem} \label{t2}
 Let $K$ be a field,  $ A=K\langle x\rangle$, $x=(x_1,\ldots,x_n)$,  $f=(f_1,\ldots,f_r)\in K\langle x\rangle[Y]^r$, $Y=(Y_1,\ldots,Y_p)$  and  $0\leq s_1\leq \ldots \leq s_p\leq n$ be some non-negative integers.
Then there exists a map $\nu:\mathbb{N}^p\to \mathbb{N} $ such that if $y'=(y'_1,\ldots,y'_p)$,  $y'_i\in K[x_1,\ldots,x_{s_i}]$, $i\in [p]$ satisfies  $f(y')\equiv 0$ modulo $(x)^{\nu(c)}$ for some $c=(c_1,\ldots,c_p)\in \mathbb{N}^p$ and $\ord y'_i=c_i$, $i\in [p]$ then there exists $y_i\in K\langle x_1,\ldots,x_{s_i}\rangle$ for all $ i\in [p]$ such that $y=(y_1,\ldots,y_p)$ is a solution of $f$ in $A$ and  $\ord y_i=c_i$ for all $i\in [p]$.
\end{Theorem}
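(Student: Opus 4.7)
The plan is to argue by contradiction via the ultrapower machinery set up in the preceding section. Suppose the conclusion fails at some fixed $c=(c_1,\ldots,c_p)\in \mathbb{N}^p$: then for every $N\in \mathbb{N}$ one can find $y^{(N)}=(y^{(N)}_1,\ldots,y^{(N)}_p)$ with $y^{(N)}_i\in K[x_1,\ldots,x_{s_i}]$, $\ord y^{(N)}_i=c_i$, and $f(y^{(N)})\in (x)^N$, while no exact solution of $f=0$ lying in $\prod_i K\langle x_1,\ldots,x_{s_i}\rangle$ with orders $c_i$ exists.

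Fix a nonprincipal ultrafilter $D$ on $\mathbb{N}$ and set $K^{*}=K^{\mathbb{N}}/D$. Writing $y^{(N)}_i=\sum_\alpha a^{(N)}_{i,\alpha}x^\alpha$ with $a^{(N)}_{i,\alpha}\in K$, I would form the coefficient-wise ultralimit $\hat y_i:=\sum_\alpha [(a^{(N)}_{i,\alpha})_N]\, x^\alpha \in K^{*}[[x_1,\ldots,x_{s_i}]]$. Because every fixed-degree coefficient of $f(y^{(N)})$ vanishes once $N$ exceeds its degree, the corresponding coefficient of $f(\hat y)$ vanishes in $K^{*}$, so $f(\hat y)=0$ in $K^{*}[[x]]^r$. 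The condition $\ord y^{(N)}_i=c_i$ forces $\hat y_i\in (x)^{c_i}$, and since there are only finitely many multi-indices of total degree $c_i$, the ultrafilter property (pigeonhole) fixes one index $\alpha_i$ with $|\alpha_i|=c_i$ and $\{N:a^{(N)}_{i,\alpha_i}\neq 0\}\in D$; the corresponding coefficient of $\hat y_i$ is nonzero in $K^{*}$, giving $\ord \hat y_i=c_i$.

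Next I would invoke the nested Artin approximation theorem for algebraic power series over the field $K^{*}$ (\cite[Theorem 3.7]{P}, \cite[Corollary 3.7]{P1}) with the nested chain $\{1,\ldots,s_1\}\subseteq\cdots\subseteq\{1,\ldots,s_p\}$, requesting approximation to order $c^{*}+1$ where $c^{*}=\max_i c_i$. This produces $\tilde y$ with $\tilde y_i\in K^{*}\langle x_1,\ldots,x_{s_i}\rangle$, $f(\tilde y)=0$, and $\tilde y_i\equiv \hat y_i \bmod (x)^{c^{*}+1}\subseteq (x)^{c_i+1}$. The last congruence preserves the leading degree-$c_i$ component of $\hat y_i$, so $\ord \tilde y_i=c_i$. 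Finally, $K\to K^{*}$ is algebraically pure: if a finite polynomial system over $K$ has a solution $[(\xi_N)_N]$ in $K^{*}$, then $\{N:\xi_N\text{ solves it in }K\}\in D$, hence is nonempty. Applying Theorem \ref{t} to this algebraically pure extension, to the subsets $J_i=\{1,\ldots,s_i\}$, and to the exact solution $\tilde y$ furnishes $y\in \prod_i K\langle x_1,\ldots,x_{s_i}\rangle$ with $f(y)=0$ and $\ord y_i=c_i$, contradicting the standing hypothesis.

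The conceptual heart of the proof is the middle step, the passage from the formal ultrapower solution $\hat y$ lying in $K^{*}[[x]]^p$ to an algebraic nested solution $\tilde y$ lying in $\prod_i K^{*}\langle x_{J_i}\rangle$: this relies on the nontrivial nested Artin approximation theorem for algebraic power series, applied here over the potentially very large field $K^{*}$. The ultrapower coefficient lift at the beginning and the descent via Theorem \ref{t} at the end are then essentially bookkeeping, and the whole argument is necessarily non-constructive because of the initial choice of a nonprincipal ultrafilter.
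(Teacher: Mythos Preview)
Your argument is correct and follows essentially the same route as the paper's own proof: contradiction at a fixed $c$, ultrapower with respect to a nonprincipal ultrafilter on $\mathbb{N}$ to obtain a nested formal solution over $K^{*}$ with the correct orders, nested Artin approximation over $K^{*}$ to pass to $K^{*}\langle x\rangle$, and descent to $K$ via Theorem~\ref{t} using that $K\to K^{*}$ is algebraically pure. The only cosmetic difference is that you build $\hat y_i\in K^{*}[[x_1,\ldots,x_{s_i}]]$ directly by taking coefficient-wise ultralimits, whereas the paper first forms ${y'}_i^{*}\in (K\langle x_1,\ldots,x_{s_i}\rangle)^{*}$ and then invokes the isomorphism $(K\langle x\rangle)^{*}/\bigcap_l (x)^l\cong K^{*}[[x]]$; these produce the same element, and the subsequent order computation and pigeonhole step are identical.
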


\begin{proof} We will show that given $c=(c_1,\ldots,c_p)\in \mathbb{N}^p$ there exists an integer $k_c\in \mathbb{N}$ such that if    $y'=(y'_1,\ldots,y'_p)$,  $y'_i\in K[x_1,\ldots,x_{s_i}]$ satisfies  $f(y')\equiv 0$ modulo $(x)^{k_c}$  and $\ord y'_i=c_i$, $i\in [p]$ then there exists $y_i\in K\langle x_1,\ldots,x_{s_i}\rangle$ for all $ i\in [p]$ such that $y=(y_1,\ldots,y_p)$ is a solution of $f$ in $A$ and  $\ord y_i=c_i$ for all $i\in [p]$.

Assume that this is false. Thus there exists $c$ such that for all $k\in \mathbb{N}$ there exists $y'^{(k)}=(y'^{(k)}_1\ldots,y'^{(k)}_p)$,  $y'^{(k)}_i\in K[x_1,\ldots,x_{s_i}]$ with  $f(y'^{(k)})\equiv 0$ modulo $(x)^{k}$ and $\ord y'^{(k)}_i=c_i$, $i\in [p]$,
but there exists no $y=(y_1,\ldots,y_p)$, $y_i\in K\langle x_1,\ldots,x_{s_i}\rangle$ for all $ i\in [p]$ which is a solution of $f$ in $A$ and  $\ord y_i=c_i$ for all $i\in [p]$.

Let $D$ be a nonprincipal ultrafilter on $\mathbb{N}$ and $(K\langle x\rangle )^*$ be the ultrapower of $K\langle x\rangle$ with respect to $D$.
Take ${y'}_i^*=[({y'}_i^{(k)})_{k\in \mathbb{N}}] \in (K[ x_1, \ldots , x_{s_i}])^* \subset  (K\langle x_1, \ldots , x_{s_i}\rangle)^* \subset (K\langle x\rangle )^*$. Consider the canonical surjection  
$$\eta_i:(K\langle x_1, \ldots , x_{s_i}\rangle)^* \to (K\langle x_1, \ldots , x_{s_i}\rangle)^* /\bigcap_{l \in \mathbb{N}}(x_1, \ldots , x_{s_i})^l,$$ and set $ \hat{{y'}_i^*}=\eta_i({y'}_i^*) $. We have the following commutative diagram:

$$
  \begin{xy}\xymatrix{(K\langle x_1, \ldots , x_{s_i}\rangle)^*  \ar[d]^{\eta_i}\ar[r] & (K\langle x \rangle)^* \ar[d] \\
( K\langle x_1, \ldots , x_{s_i}\rangle)^* /\bigcap_{l \in \mathbb{N}}(x_1, \ldots , x_{s_i})^l\ar[r] &(K\langle x \rangle)^*/\bigcap_{l \in \mathbb{N}}(x)^l
 } \end{xy}
  $$

By \cite[Proposition 2.3]{P0} (see also \cite{P}, \cite[Theorem 2.5]{P1}, \cite[Lemma 3.3.2]{R}) we know that
 $$(K\langle x_1, \ldots , x_{s_i}\rangle)^* /\bigcap_{l \in \mathbb{N}}(x_1, \ldots , x_{s_i})^l\cong K^*[[x_1, \ldots , x_{s_i}]]$$ and similarly
 $$(K\langle x \rangle)^*/\bigcap_{l \in \mathbb{N}}(x)^l \cong K^*[[x]].$$ Thus we may consider $\hat{{y'}^*_i}\in  K^*[[ x_1,\ldots,x_{s_i}]]\subset  K^*[[ x]]$ and note that\\ $\hat{{y'}^*} =(\hat{{y'}_1^*},\ldots,\hat{{y'}_p^*})$ is a solution of $f$ in $K^*[[x]]$. Indeed,
 we have  $f(y'^{(k)})\equiv 0$ modulo $(x)^{k}$ for all $k$. Fix $t\in\mathbb{N}$. we have $f(y'^{(k)})\in (x)^{t}$ for all $k \geq t$ and so $\{k\in \mathbb{N}:f(y'^{(k)})\in (x)^{t}\}\in D$ because $ D$ contains cofinite sets, being a nonprincipal ultrafilter.  It follows that $f({y'}^*)\in (x)^t( K\langle x_1, \ldots , x_n\rangle)^*$ for all $t$. So we have $f({y'}^*)\in \bigcap_{t \in \mathbb{N}}(x)^t( K\langle x_1, \ldots , x_n\rangle)^*$ and hence $\hat{{y'}^*} =(\hat{{y'}_1^*},\ldots,\hat{{y'}_p^*})$ gives a solution of $f$ in $K^*[[x]]$.

 We claim that $\ord \hat{{y'}^*}_i=c_i$. Indeed, set
  $s_{ij}=\{k\in \mathbb{N}:{y'}^{(k)}_{ij}\not =0\}$.
   By assumption $\ord {y'}^{(k)}_i=c_i$, $i\in [p]$ and so $\cup_{j\in \mathbb{N}^n, |j|=c_i}s_{ij}=\mathbb{N}$ which implies
 $s_{ij_i}\in D$ for some $j_i$ because $D$ is an ultrafilter. It follows that ${y'}^*_{ij_i}\not =0$ and so $\hat{{y'}^*}_{ij_i}\not =0$, which shows our claim.

 By  Nested Artin Approximation Property \cite[Theorem 3.7]{P} (see also \cite[Corollary 3.7]{P1}, \cite[Theorem 5.2.1]{R}) this implies that for all $u \in \mathbb{N}$ there exists a solution $\tilde{y}=(\tilde{y}_1, \ldots , \tilde{y}_p)\in K^*\langle x \rangle$ of $f$ with $\tilde{y}_i \in  K^*\langle x_1, \ldots , x_{s_i}\rangle$ and $\tilde{y}_i \equiv \hat{{y'}_i^*} $ mod $x^u$ for all $i$. Take $u > c_i$ for all $i$. This implies that ord $\tilde{y}_i$= ord $\hat{{y'}_i^*}$, $i\in [p]$. Since  $K \to K^*$ is algebraically pure we get by  Theorem \ref{t} a $y''$ in $K\langle x \rangle$ such that $y''_i \in K\langle x_1, \ldots , x_{s_i}\rangle$ with ord $y''_i$= ord $\tilde{y}_i$ and $f(y'')=0$ , a contradiction.
\hfill\ \end{proof}

\begin{Remark} {\em The above theorem does not hold if $A$ is the complex convergent power series ring in $x$. Indeed, Gabrielov's example (see \cite{Ga}, or \cite[Example 5.3.1]{R}) gives a nested formal solution of a certain polynomial equation $f$, which has no nested convergent solutions. Clearly, the formal solution defines some polynomials $y^{(k)}\in \mathbb{C}[x]$ such that $f(y^{(k)}\equiv 0$ mod $(x)^k$ for all $k\in \mathbb{N}$. Thus Question \ref{q4} has a negative answer for $A$.}
\end{Remark}

\begin{Remark} (Rond)\label{r1} {\em In fact it is true a stronger result,  namely:
 There exist a   map $\nu':\mathbb{N}^p\to \mathbb{N} $ such that if $y'=(y'_1,\ldots,y'_p)$,  $y'_i\in K[x_1,\ldots,x_{s_i}]$, $i\in [p]$ satisfies  $f(y')\equiv 0$ modulo $(x)^{\nu'(c)}$ for some $c\in \mathbb{N}$  then there exists $y_i\in K\langle x_1,\ldots,x_{s_i}\rangle$ for all $ i\in [p]$ such that $y=(y_1,\ldots,y_p)$ is a solution of $f$ in $A$ and  $y_i\equiv y'_i$ modulo $(x)^c$ for all $i\in [p]$.

This is stated in \cite[Corollary 5.2.4]{R}. Its proof is done  in a particular case \cite[Theorem 4.2]{BDLV} (see also \cite[Remark, page 199]{BDLV}) but  the general case follows in the same way using the Nested Approximation property \cite[Theorem 3.7]{P}.}
\end{Remark}

{\bf Acknowledgements} The first author  gratefully acknowledges the support from the ASSMS GC. University Lahore, for arranging her visit to Bucharest, Romania and she is also grateful to the Simion Stoilow Institute of the Mathematics of the Romanian Academy for inviting her.

\vskip 0.5 cm

\end{document}